\title{\bf Local Functions on Blocks}
\author{ Jamie Mason, \ JXM1289@bham.ac.uk, \\ School of Mathematics, University of Birmingham}
\begin{document}

\maketitle

\begin{abstract}
    We define a block-by-block version of Isaacs and Navarro's chain local condition and then prove that the Alperin--McKay conjecture is equivalent to a certain function on groups having this property. We then go on to prove several other block-by-block versions of results from Isaacs and Navarro's paper.
\end{abstract}

\section{Introduction}

We define an integer valued function, \(f\), on pairs \((G,B)\), where \(G\) is a group and \(B\) is a \(p\)-block of \(G\), to be {\it block chain local} if
\[
    \sum_{C \in \RR} (-1)^{|C|} \sum_{b \mid B_C}f(G_C,b)=0,
\]
where the first sum runs over representatives of \(G\)-orbits of chains of \(p\)-subgroups in \(G\), \(G_C\) is the stabiliser group of \(C\) and \(B_C\) is a sum of blocks of \(G_C\). This definition is motivated as a block-by block version of the definition of local functions give by Isaacs and Navarro \cite{isa20}. In their paper they show that the McKay conjecture is equivalent to a function on groups having this local property and here we extend this result to show that the Alperin--McKay conjecture is equivalent to a function on pairs being block chain local. 

As Isaacs and Navarro note, their work is not the first occurrence of a local global conjecture being restated as a condition on chains of subgroups. This is partially as a result of the local-global conjectures' resistance to being proved in full generality, with the one exception being Brauer's height zero conjecture \cite{mal22}, so effort has been directed towards finding equivalent statements to the conjectures. Perhaps the most well known example of this is Kn\"orr and Robinson's restatement of Alperin's weight conjecture \cite[Theorem~3.8]{kno89}, which in our language is equivalent to \(l(G,B)\), the number of irreducible Brauer characters in \(B\),  being block chain local. They also show that \(k(G,B)-l(G,B)\) is, in our language, block chain local \cite[Corollary~4.3]{kno89} and thus \(k(G,B)\), the number of irreducible ordinary characters in \(B\), being block chain local is dependant on Alperin's weight conjecture holding. We extend some of the ideas in their work to show that the Aplerin--McKay can also be restated in a similar way.



In order to do this we first introduce chains, an involution on the set of chains and normalising triples. These are all defined by Isaacs and Navarro \cite{isa20} but we define them here so that this paper can be read independently of theirs. We can then go on to define what we mean by block chain local and introduce the Alperin--McKay function. We then prove that this function is block chain local if and only if the conjecture holds. This proof method has much the same structure as \cite[Section~4]{isa20} but note that most results will be different as we are considering blocks.

The final section of this paper is dedicated to proving several other block-by-block versions of results from Isaacs and Navarro, namely sufficient conditions for functions to be block chain local and an alternate proof that the Alperin--McKay conjecture holds for blocks of defect one.

Throughout this text \(G\) will be a finite group with order divisible by a prime \(p\) and \(k\) will be an algebraically closed field of characteristic \(p\) unless stated otherwise.

\section{Chains of subgroups}
\label{sec;bcl}

Here we introduce what we mean by chains, an involution on the set of chains and normalising triples. These are all defined by Isaacs and Navarro \cite{isa20} but some of the results here differ from theirs.


\subsection{Chain stabilisers}

\begin{defn}
    Given a group \(G\) we say a {\it chain} \(C\) in \(G\) is a totally ordered set of \(p\)-subgroups, \( C = \set{Q_i}{0 \leq i \leq n}\), with ordering \({Q_0 < Q_1 < \cdots < Q_n}\) and where \(Q_0\) is the trivial group.
\end{defn}

We define the length of \(C\), denoted \(|C|\), to be the number of non-trivial groups in \(C\), in other words for \(C\) as above \(|C|=n\). Our requirement that \(Q_0=\{1\}\) means all our chains will have non-negative length. Note that this is not universal and other authors may not require \(\{1\}\) to be in the chain. 

These chains are the same as those used by Isaacs and Navarro \cite[p.~2]{isa20} and Kn\"{o}rr and Robinson \cite[Definition~2.1]{kno89}. Our choice to use chains of arbitrary \(p\)-subgroups is not universal and in fact a result by Kn\"{o}rr and Robinson \cite[Proposition~3.3]{kno89} shows that for many purposes normal chains, chains of radical subgroups and chains of elementary abelian subgroups can be used. 

\begin{defn}
    The {\it chain stabiliser}, \(G_C\), of a chain \(C\) in \(G\) is the set of elements of \(G\) that stabilise all elements in \(C\) under the conjugation action. In other words \(G_C\) is the intersection of the normalisers of each \(Q_i\) in \(C\).
\end{defn}

The chain stabiliser subgroups are what Isaacs and Navarro use to define chain local and with that obtain many results. As we are working with blocks we also need to introduce a way to check when a block of \(kG_C\) inducts up to a block of \(kG\).

First we introduce what we mean by block induction in this context. Let \(B\) be  a block of \(kG\) and let \(Q\) be a subgroup of \(G\) with \(b\) a block of \(kQ\). Then we say that \(b\) corresponds to a block \(B\) of \(kG\) if \(\Br{Q}{e_B}e_b = e_b\). In other words the block idempotent of \(b\) is a summand of the restriction of the block idempotent of \(B\) to the centraliser of \(Q\) in \(G\). We denote this as \(b^G=B\) and call it {\it block induction}. Note that it is not guaranteed that \(b\) has a block of \(kG\) corresponding to it. When \(b\) does have a corresponding block we say \(b^G\) is defined. Further details of ths definition of block induction can be found in \cite[Section 5.3]{nag89}. We can now recall some basic results relating to block induction.

\begin{lem}[see {\cite[Lemma 5.3.3]{nag89}}]
    \label{lem;blj}
    Let \(Q\) be a subgroup of \(G\) and \(b\) a block of \(kQ\) with defect group \(D\). If \(b^G\) is defined then \(D\) is contained in a defect group of \(b^G\).
\end{lem}

\begin{lem}[see {\cite[Lemma 5.3.4]{nag89}}]
    \label{lem;bli}
    Let \(Q\) and \(K\) be subgroups of \(G\) with \(b\) a block of \(Q\). If  \(Q\) is also a subgroup of \(K\) with the block \(b^K\) defined and either  \(\left(b^K\right)^G\) or \(b^G\) are defined then the third is also defined and \({b^G = \left(b^K\right)^G}\).
\end{lem}

We define \(B_C\) to be \(\Br{Q_n}{e_B} k G_C\), where \(e_B\) is the central idempotent corresponding to \(B\) and \(\text{Br}_{Q_n}\) is the Brauer morphism at \(Q_n\). Note that \(\Br{Q_n}{e_B}\) is either zero or is a central idempotent of \(kG_C\), as \({\ce{G}{Q_n} \leq G_C \leq \no{G}{Q_n}}\), and thus \(B_C\) is either zero or a sum of blocks of \(G_C\). This sum of blocks was defined by Kn\"{o}rr and Robinson \cite{kno89}, who proved the following result. 

\begin{lem}[{\cite[Lemma 3.2]{kno89}}]
    \label{lem;bcb}
    Let \(C\) be a chain in \(G\). If \(b\) is a block of \(kG_C\) we have that \(b^G\) is defined and further \(b^G = B\) if and only if \(b\) is a summand of \(B_C\).
\end{lem}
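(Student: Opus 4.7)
The plan is to reduce the claim to the classical Brauer correspondence in the setting \(C_G(P) \leq H \leq N_G(P)\), applied with \(P = Q_n\) (the top group of the chain) and \(H = G_C\). The containment \(\ce{G}{Q_n} \leq G_C \leq \no{G}{Q_n}\) has already been observed immediately before the lemma statement, so the hypotheses of this correspondence are in place, and the Brauer homomorphism \(\Br{Q_n}{\cdot}\) is a well-behaved \(k\)-algebra map \((kG)^{Q_n} \to kG_C\) sending \(Z(kG)\) into \(Z(kG_C)\).

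The first step is purely structural. Since \(\Br{Q_n}{\cdot}\) is a unital ring homomorphism when restricted to the \(Q_n\)-fixed subalgebra, applying it to the block decomposition \(\sum_B e_B = 1\) of \(kG\) gives a decomposition \(\sum_B \Br{Q_n}{e_B} = 1\) in \(Z(kG_C)\) into pairwise orthogonal central idempotents, each either zero or a sum of block idempotents of \(kG_C\). Consequently every block idempotent \(e_b\) of \(kG_C\) is a summand of exactly one \(\Br{Q_n}{e_B}\), so \(b\) appears as a summand of \(B_C\) for exactly one block \(B\) of \(kG\).

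The second step is to match this unique \(B\) with the induced block \(b^G\). Here I would invoke (or, if working from scratch, verify via central characters) the standard fact that in the situation \(\ce{G}{Q_n} \leq H \leq \no{G}{Q_n}\), induction \(b \mapsto b^G\) is defined for every block \(b\) of \(kH\), and \(b^G = B\) is characterised by \(e_b \cdot \Br{Q_n}{e_B} = e_b\). Combined with the decomposition above, this equality is exactly the condition that \(e_b\) be a summand of \(\Br{Q_n}{e_B}\), that is, that \(b\) be a summand of \(B_C\). Uniqueness of the \(B\) to which \(b\) corresponds (from step one) then forces \(b^G\) to equal that specific \(B\).

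The main obstacle is not the bookkeeping but justifying the existence of \(b^G\) and the characterisation \(e_b \Br{Q_n}{e_B} = e_b\). If we are willing to cite the Brauer correspondence in the \(\ce{G}{P} \leq H \leq \no{G}{P}\) setting, the proof is essentially immediate. If we want a self-contained argument, the delicate point is to show that the central character \(\omega_b\) of \(Z(kG_C)\) lifts uniquely through \(\Br{Q_n}{\cdot}\) to a central character \(\omega_B\) on \(Z(kG)\); this is where one needs that \(G_C\) sits between \(\ce{G}{Q_n}\) and \(\no{G}{Q_n}\), as otherwise \(\Br{Q_n}{\cdot}\) would not carry enough information about \(Z(kG)\) to determine a unique block \(B\).
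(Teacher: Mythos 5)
The paper does not give its own proof of this lemma: it is stated as a quotation of Kn\"{o}rr and Robinson \cite[Lemma~3.2]{kno89} with no argument supplied, so there is no in-paper proof to compare against. Your argument is a correct (and essentially the standard) proof of the quoted result. Step one is fine: since $\mathrm{Br}_{Q_n}$ restricted to the $Q_n$-fixed subalgebra is a unital $k$-algebra map sending $Z(kG)$ into $Z(kG_C)$ (using $C_G(Q_n)\leq G_C\leq N_G(Q_n)$), applying it to $\sum_B e_B = 1$ gives a pairwise orthogonal central-idempotent decomposition of $1$ in $Z(kG_C)$, so each block idempotent of $kG_C$ sits under exactly one $\mathrm{Br}_{Q_n}(e_B)$.

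Step two, as you acknowledge, is where the content lies. Two remarks to make it fully precise. Existence of $b^G$ follows most cleanly from Lemma~\ref{lem;bli}(iii) already in the paper: since $Q_n \trianglelefteq G_C$, every defect group $D$ of a block $b$ of $kG_C$ contains $Q_n$, whence $C_G(D)\leq C_G(Q_n)\leq G_C$ and $b^G$ is defined. For the characterisation $b^G = B \iff e_b\,\mathrm{Br}_{Q_n}(e_B) = e_b$, one needs that the central character $\lambda_b$ composed with the ``class-restriction'' map $s_{G_C}^G$ agrees with $\lambda_b\circ\mathrm{Br}_{Q_n}$ on $Z(kG)$; the two maps themselves differ, and their difference on a $G$-class sum is a sum of $G_C$-class sums $\hat{L}$ with $L\cap C_G(Q_n)=\emptyset$. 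The point is that $\lambda_b(\hat{L})=0$ for each such $L$: such an $L$ has defect group not containing $Q_n$, while the defect group of $b$ does contain $Q_n$ (as $Q_n\trianglelefteq G_C$), and $\lambda_b(\hat{L})\neq 0$ forces the defect group of $b$ to be contained (up to $G_C$-conjugacy) in that of $L$. This is the ``nontrivial'' fact you are implicitly citing; with it in hand, $\lambda_b\circ s_{G_C}^G = \lambda_b\circ\mathrm{Br}_{Q_n}$ is an algebra homomorphism, hence equals $\lambda_B$ for a unique $B$, and that $B$ is precisely the one with $e_b$ a summand of $\mathrm{Br}_{Q_n}(e_B)$. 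So the proposal is sound; it just delegates, as you flag, exactly the same amount of work to a standard reference that the paper delegates to Kn\"{o}rr--Robinson.
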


We write \(b \mid B_C\) if \(b\) is a summand of \(B_C\). Thus we can now link blocks of \(kG_C\) to blocks of \(G\).

\subsection{An involution on the set of chains}

Let \(P \leq G\) be a non-trivial \(p\)-subgroup. Consider the set of all chains of \(p\)-subgroups of \(G\)  where either \(P\) normalises every group in \(C\) or every group in \(C\) normalises \(P\), for each \(C\) in this set. We can define a permutation on the set of chains as follows. 

Let \( C = \set{Q_i}{0 \leq i \leq n}\) be in this set and note that as \(P \nsubseteq Q_0\) there is a maximal \(m\) such that \(0 \leq m \leq |C|\) and \(P \nsubseteq Q_m\). Thus \(Q_m < Q_m P\) and \(Q_m P \subseteq Q_i\) for all \(m+1 \leq i \leq n\). Now we consider two cases: if \(Q_m P\) is in \(C\), we set \(C^* = C \setminus \left\{ Q_m P\right\}\) and otherwise, if \(Q_m P\) is not in \(C\) we set \(C^* = C \cup \left\{ Q_m P\right\}\). 

The chain \(C^*\) depends on the group \(P\) as well as on \(C\), however the \(P\) in question will generally be clear from context so we do not include it in the notation for \(C^*\). We can now state a few basic results about this permutation.

\begin{lem}[{\cite[Lemma~4.1]{isa20}}]  \label{lem;sta}
    If \(C\), \(P\) and \(G\) are as above, then the following properties hold:
    \begin{enumerate}
        \item   \label{lem;sta1} Either \(C \subsetneq C^*\) and \(|C^*| = |C| + 1\) or \(C^* \subsetneq C\) and \(|C^*| = |C| - 1\).
        \item   \label{lem;sta2} \(\left( C^* \right)^* = C\).
        \item   \label{lem;sta3} Every subgroup of \(G\) contained in \(G_C\) and normalising \(P\) is also contained in \(G_C\).
    \end{enumerate}
\end{lem}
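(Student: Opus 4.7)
The plan is to handle the three parts separately, leaning heavily on the case split built into the definition of $C^*$.

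For part \ref{lem;sta1}, I would argue directly from the construction. The two cases in the definition of $C^*$ correspond exactly to the two alternatives in the conclusion, so the only real content is to check that when $Q_mP \notin C$, the set $C \cup \{Q_mP\}$ really is a chain of $p$-subgroups. This amounts to two observations: first, $Q_mP$ is a $p$-subgroup because the hypothesis guarantees that $P$ and $Q_m$ normalise each other, so $Q_mP$ is a subgroup, and its order $|Q_m||P|/|Q_m\cap P|$ is a power of $p$; second, $Q_mP$ slots strictly between $Q_m$ and $Q_{m+1}$ (if $m<n$) or at the top (if $m=n$), which follows from the maximality of $m$ together with $Q_mP\subseteq Q_i$ for $i\geq m+1$.

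For part \ref{lem;sta2}, the key observation is that the subgroup $Q_m$ singled out in the construction does not change under the star operation. Since $Q_mP$ always contains $P$, adding it to or deleting it from $C$ does not affect which chain member is maximal among those not containing $P$. So when $Q_mP\in C$ we get $C^*=C\setminus\{Q_mP\}$ with the same distinguished $Q_m$, and $Q_mP\notin C^*$, so $(C^*)^*=C^*\cup\{Q_mP\}=C$; the reverse case is symmetric.

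For part \ref{lem;sta3}, I read the conclusion as asserting containment in $G_{C^*}$ (the final $G_C$ in the excerpt looks like a typo). If $C^*\subsetneq C$ then $G_{C^*}\supseteq G_C$ automatically, so any subgroup of $G_C$ lies in $G_{C^*}$ regardless of whether it normalises $P$. In the opposite case $C^*=C\cup\{Q_mP\}$, I need to show that a subgroup $H\leq G_C$ normalising $P$ also normalises $Q_mP$. But $H$ normalises $Q_m$ because $Q_m\in C$, and it normalises $P$ by hypothesis, so $H$ normalises the product $Q_mP$; combined with $H\leq G_C$ this gives $H\leq G_{C^*}$.

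I do not anticipate a substantive obstacle: the whole lemma is bookkeeping about how a single subgroup $Q_mP$ is inserted into or removed from a chain. The one place a little care is needed is verifying that $Q_mP$ is a $p$-subgroup in part \ref{lem;sta1}, and this is precisely where the normalisation hypothesis on $P$ and the chain elements is used.
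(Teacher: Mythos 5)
Your proof is correct and is the direct verification from the construction that the paper alludes to (the paper gives no details of its own, merely remarking that all parts ``should be easy to see from our construction'' and citing Isaacs and Navarro). The one small imprecision is in part (i): the hypothesis only guarantees that one of $P$, $Q_m$ normalises the other, not that they normalise each other, but this is harmless since one-sided normalisation already makes $Q_m P$ a $p$-subgroup.
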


All parts of this lemma should be easy to see from our construction. 

\subsection{Normalising triples}

\begin{defn}
    Let \(C\) be a chain of \(p\)-subgroups of \(G\). Let \(P\) be a non-trivial \(p\)-subgroup of \(G_C\) and let \(X\) be a non-empty subset of \(G_C \cap \no{G}{P}\). We call \(\left( C, P, X \right)\) a {\it normalising triple}.
\end{defn}

Again these are first defined by Isaacs and Navarro \cite[p.~16]{isa20}. We have a well-defined action of \(G\) on the set of normalising triples given by conjugation of \(C\), \(P\) and \(X\) by some \(g\) in \(G\). Let \(\OO\) be an orbit of normalising triples under this action. All chains that form the left-hand component of triples in \(\OO\) have the same length so we can set \(s(\OO) = (-1)^{|C|}\), where \(C\) is the first component of a triple in \(\OO\). If \(\RR\) is a set of representatives of the orbits of chain of \(p\)-subgroups of \(G\) under conjugation then one can see that for each orbit \(\OO\) exactly one member of \(\RR\)  appears as the first component in a member of \(\OO\). 

Notice that as \(P\) is contained in \(G_C\) it can be used to define \(C^*\). As \(X\) is contained in \(\no{G}{P}\) and \(G_C\) then it is also in \(G_{C^*}\) and thus \(\left( C^*, P, X \right)\) is also a normalising triple uniquely determined by \(\left( C, P, X \right)\). This gives us a permutation on the set of normalising triples where \(\left( C, P, X \right)^* = \left( C^*, P, X \right)\).

We can extend the idea of this permutation as follows. If \(\OO\) is a \(G\)-orbit of normalising triples, then set
    \[
        \OO^* = \set{\left( C^*, P, X \right)}{\left( C, P, X \right) \in \OO}.
    \]
The set \(\OO^*\) is also an orbit of normalising triples as \(\left(\left( C, P, X \right)^g\right)^* = \left(\left( C, P, X \right)^*\right)^g\) for all \(g\) in \(G\). Additionally \(s(\OO) = - s(\OO^*)\) so \(\OO \neq \OO^*\) but \(|\OO| =|\OO^*|\). Finally note that \((\OO^*)^* = \OO\) and we say that \(\OO\) and \(\OO^*\) are paired. 

We now quote a further result by Isaacs and Navarro \cite[Lemma~4.4]{isa20} which describes sets of normalising triples.

\begin{lem}[{\cite[Lemma~4.4]{isa20}}]
    \label{lem;orb}
    For \(G\) a finite group, let \(\TT\) be a set of normalising triples in \(G\), invariant under the \(G\)-action. Let \(C\) be a chain in \(G\) and \(\QQ\) be the set of pairs \((Q,Y)\) such that \(\left( C,Q, Y \right)\) is in \(\TT\). Let \(\UU\) be a \(G_C\)-orbit on \(\QQ\) and let 
    \[
        \SS = \set{\left( C,Q, Y \right)}{(Q,Y) \in \UU}.
    \]
    Then there exists a unique \(G\)-orbit \(\OO\) on \(\TT\) such that \(\SS\) is contained in \(\OO\). Furthermore the map \(\tau\), where we set \(\tau(\UU) = \OO\), is a bijection from the set of \(G_C\)-orbits on \(\QQ\) onto the set of \(G\)-orbits \(\OO\) on \(\TT\) such that \(C\) is the first component of some triple in \(\OO\).
\end{lem}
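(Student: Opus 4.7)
The plan is to show that the map $\tau$ is well-defined and then establish its bijectivity, and the whole argument hinges on one simple observation: if two triples sharing first component $C$ are $G$-conjugate, then the conjugating element must stabilise $C$ and therefore lie in $G_C$.

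First I would verify that $\tau$ is well-defined, that is, that $\SS$ is contained in a single $G$-orbit $\OO$ (uniqueness being automatic since $\SS$ is nonempty and distinct $G$-orbits are disjoint). Given $(Q,Y),(Q',Y') \in \UU$, by definition of a $G_C$-orbit there exists $g \in G_C$ with $(Q',Y')=(Q,Y)^g$. Since $g$ stabilises $C$, we obtain $(C,Q',Y')=(C,Q,Y)^g$, so every pair of triples in $\SS$ is $G$-conjugate. Thus $\SS$ lies in a unique $G$-orbit $\OO$ on $\TT$, and we set $\tau(\UU)=\OO$.

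Next I would check surjectivity. Suppose $\OO$ is a $G$-orbit on $\TT$ such that some triple in $\OO$ has first component $C$, say $(C,Q,Y)\in\OO$. Then $(Q,Y)\in\QQ$, and letting $\UU$ be the $G_C$-orbit of $(Q,Y)$ on $\QQ$, the previous paragraph gives $\tau(\UU)=\OO$. For injectivity, suppose $\tau(\UU_1)=\tau(\UU_2)=\OO$ and pick $(Q_i,Y_i)\in\UU_i$. Then $(C,Q_1,Y_1)$ and $(C,Q_2,Y_2)$ both lie in $\OO$, so there is $g\in G$ with
\[
(C,Q_2,Y_2)=(C,Q_1,Y_1)^g=(C^g,Q_1^g,Y_1^g).
\]
The equality $C^g=C$ forces $g\in G_C$, and then $(Q_2,Y_2)=(Q_1,Y_1)^g$ shows that $(Q_1,Y_1)$ and $(Q_2,Y_2)$ lie in the same $G_C$-orbit, so $\UU_1=\UU_2$.

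There is no real obstacle here; the proof is a bookkeeping exercise in orbit-stabiliser theory, and the only subtle point to keep visible is the implicit use of $G$-invariance of $\TT$, which guarantees that applying a $G_C$-element to a pair in $\QQ$ produces another element of $\QQ$ (so $G_C$ genuinely acts on $\QQ$). If I were worried about rigour I might at the start briefly note that $\QQ$ is indeed $G_C$-stable: for $(Q,Y)\in\QQ$ and $g\in G_C$, the triple $(C,Q^g,Y^g)=(C,Q,Y)^g$ lies in $\TT$ by $G$-invariance, so $(Q^g,Y^g)\in\QQ$.
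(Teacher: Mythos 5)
The paper does not reproduce a proof of this lemma; it cites Isaacs and Navarro's original argument. Your proof is correct and is the natural one: the pivot is exactly the observation that a $G$-element conjugating one triple with first component $C$ to another such triple must fix $C$ setwise, and since conjugation preserves the strict inclusions (equivalently the cardinalities) $|Q_0|<|Q_1|<\cdots<|Q_n|$, fixing $C$ setwise forces fixing each $Q_i$, i.e.\ $g\in G_C$. You might add that one line of justification for the step ``$C^g=C$ forces $g\in G_C$,'' but otherwise the well-definedness, surjectivity, and injectivity checks are complete, and your closing remark about $G_C$-stability of $\QQ$ is the right point to flag.
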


Note that this makes sense because \(\QQ\) is \(G_C\)-invariant as well as \(G\)-invariant. The proof of this result is again due to Isaacs and Navarro \cite[Lemma~4.4]{isa20}. The following result gives us some properties of a normalising triple provided \({X=\no{G_C}{P}}\).

\begin{lem}
    \label{lem;bGB}
    Let \(G\) be a group and \(B\) a block of \(kG\). If \(\left( C,P, X \right)\) is a normalising triple where \(X= \no{G_C}{P}\), then the following hold:
        \begin{enumerate}
            \item   \(X=\no{G_{C^*}}{P}\).
            \item   If \(P\) is a defect group of the block \(b\) of \(kG_C\) with positive defect and \(b^G=B\) then there is a unique block \(b^*\) of \(G_{C^*}\) associated to \(b\) with defect group \(P\) and \((b^*)^G=B\).
        \end{enumerate}
\end{lem}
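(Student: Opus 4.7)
For part~(1), the key ingredient is Lemma~\ref{lem;sta}(3), which I read as saying that any subgroup of $G$ contained in $G_C$ and normalising $P$ is contained in $G_{C^*}$ (the printed statement has ``$G_C$'' on the right, which must be a typo since otherwise it is vacuous). Since $X \subseteq G_C$ and $X \subseteq \no{G}{P}$ by hypothesis, the lemma gives $X \subseteq G_{C^*}$, hence $X \subseteq \no{G_{C^*}}{P}$. Applying the same lemma with $C^*$ in place of $C$ (and using $(C^*)^* = C$ from Lemma~\ref{lem;sta}(2)), the subgroup $\no{G_{C^*}}{P}$ is contained in $G_{C^*}$ and normalises $P$, so it lies in $G_C$, and therefore in $\no{G_C}{P} = X$. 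Combining the two inclusions proves~(1).

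For part~(2), my approach is to transport everything through the Brauer correspondent in the common normaliser $X$ and then appeal to transitivity of block induction. Let $b_X$ be the Brauer correspondent of $b$ in $X = \no{G_C}{P}$, so $b_X$ is a block of $kX$ with defect group $P$ satisfying $(b_X)^{G_C} = b$. By part~(1) we also have $X = \no{G_{C^*}}{P}$ with $X \leq G_{C^*}$, so Brauer's first main theorem applied inside $G_{C^*}$ supplies a unique block $b^*$ of $kG_{C^*}$ of defect group $P$ whose Brauer correspondent in $X$ is $b_X$; equivalently $(b_X)^{G_{C^*}} = b^*$. The fact that $(b_X)^{G_{C^*}}$ is defined follows from Lemma~\ref{lem;bli}(3), since $\ce{G_{C^*}}{P} \leq \no{G_{C^*}}{P} = X$.

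It remains to verify $(b^*)^G = B$. Here I would invoke the standard transitivity of block induction: if $H \leq K \leq G$ and $b_0^K$ is defined, then $b_0^G$ and $(b_0^K)^G$ are simultaneously defined and equal. Applied along the two chains $X \leq G_{C^*} \leq G$ and $X \leq G_C \leq G$ it yields
\[
    (b^*)^G = \bigl((b_X)^{G_{C^*}}\bigr)^G = (b_X)^G = \bigl((b_X)^{G_C}\bigr)^G = b^G = B.
\]
Uniqueness of $b^*$ with the stated properties is already built into the first main theorem inside $G_{C^*}$.

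The only real obstacle is administrative: one must keep track of the containments from part~(1) carefully enough to verify each hypothesis of the first main theorem and of transitivity at the correct ambient group. Transitivity of block induction itself is not stated in the excerpt, so a full write-up would cite it from a standard reference (for example Navarro's book on blocks); the positive-defect hypothesis is used only to ensure $P \neq 1$ so that the first main theorem is applicable.
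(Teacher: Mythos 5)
Your proof is correct, and the core idea for part (2) — transport $b$ through its Brauer correspondent $b_X$ in the common normaliser $X$, using part (1) and the first main theorem to identify a unique $b^*$ of $kG_{C^*}$ with defect group $P$ — is the same as the paper's. Where you diverge is in the final transitivity step. You run two applications of transitivity along $X \leq G_{C} \leq G$ and $X \leq G_{C^*} \leq G$, which requires the ``iff'' form of transitivity (if $b_0^K$ is defined, then $b_0^G$ and $(b_0^K)^G$ are simultaneously defined and equal); as you note, this is not quite what Lemma~\ref{lem;bli}(ii) says, and you would need to cite it from a standard reference such as Nagao--Tsushima or Navarro's block theory book. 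The paper instead observes that exactly one of $G_C \subseteq G_{C^*}$ or $G_{C^*} \subseteq G_C$ holds (since $C$ and $C^*$ are nested), so that one of $b^{G_{C^*}} = b^*$ or $(b^*)^{G_C} = b$ is a direct Brauer correspondence, and then applies transitivity once with all three block inductions visibly defined (Lemma~\ref{lem;bcb} handles $(b^*)^G$). The paper's version stays within the weak form of transitivity it has on record; yours is a touch slicker but leans on a slightly stronger and uncited standard fact. You also correctly spotted the typo in Lemma~\ref{lem;sta}(3) (it should read ``contained in $G_{C^*}$'') and supplied a direct proof of part (1), whereas the paper simply cites Isaacs--Navarro; the argument is the intended one.
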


\begin{proof}
    Part (i) is due to Isaacs and Navarro \cite[Lemma~4.3(a)]{isa20}. For part (ii) first we note that as \(\no{G_{C}}{P}=\no{G_{C^*}}{P}\) we can apply the Brauer correspondence twice to get from blocks of \(G_C\) with defect group \(P\) to blocks of \(G_{C^*}\) with defect group \(P\). Thus the block of \(kG_{C^*}\) corresponding to \(b\) is the block \(b^*\) where \(\Br{P}{e_{b^*}}=\Br{P}{e_b}\) and we know this is unique by the Brauer correspondence. Also as either \(C^* \subsetneq C\) or \(C \subsetneq C^*\) we have that either \((b^*)^{G_C}=b\) or \(b^{G_{C^*}}=b^*\) again by the Brauer correspondence. Thus by transitivity of block induction, as stated in Lemma~\ref{lem;bli}, we have \((b^*)^G=B\).
\end{proof}

Notice that the reason we need to exclude blocks of defect zero in (ii) is because they have trivial defect group and normalising triples are not allowed to have trivial second component. 

\section{Block chain local functions}

In order to prove an equivalence for the Alperin--McKay conjecture we first needed to define a block-by-block version of chain local, which we have called block chain local. This section follows the general structure of \cite[Section~4]{isa20}, however the results themselves and their proofs are original, unless stated otherwise.

\subsection{Functions on blocks}

We will consider a collection of pairs \((G,B)\), where \(G\) is a finite group of order divisible by \(p\) and \(B\) is a block of \(kG\), which we will call a family, denoted \(\FF\). We require that if \((G,B)\) is in \(\FF\) then so are all pairs \((H,b)\) where \(H \leq G\) and \(b^G=B\). Let \(U\) be a free abelian group, which will be the integers unless otherwise stated. We can define maps from \(\FF\) to \(U\) and we require that if there exist pairs \((G,B)\) and \((H,B')\) in \(\FF\) such that there exists an isomorphism \(\phi: G \rightarrow H\) with \(\phi(B)=B'\) then each of these maps \(f\) must have the property \(f(G,B)=f(H,B')\), in other words they are isomorphism constant. Note that these pairs in general are not Brauer pairs.

Isaacs and Navarro \cite{isa20} considered a family consisting of just groups and functions on that family. From there they defined chain local and used it to show certain functions were locally determined. We wish to do something similar, however with functions on our family \(\FF\). In order to do this we define block chain local, a block-by-block version of chain local. Lemma~\ref{lem;bcb} allows us to determine which blocks of \(G_C\) induce up to \(G\), which will be very useful.

\begin{defn}
    An isomorphism-constant function \(f\) on \(\FF\) is {\it block chain local} if for all pairs \((G,B)\) in \(\FF\) we have 
    \[
        \sum_{C \in \RR} (-1)^{|C|} \sum_{b \mid B_C}f(G_C,b)=0,
    \]
    where \(\RR\) is a set of representatives of \(G\)-orbits of chains of \(p\)-subgroups of \(G\) and \(B_C\) is as defined above.
\end{defn}

We will also sometimes say a function is block chain local on just a pair \((G,B)\) in which case we are referring to the smallest family \(\FF\) containing \((G,B)\), in other words \((G,B)\) and all pairs \((H,b)\) where \(H \leq G\) and \(b^G=B\). We will call the double sum above the alternating chain sum. It is easy to see that the sum and integer multiples of block chain local functions with values in \(\Z\) are also block chain local. We now prove a further property of block chain local functions.

\begin{lem}
    \label{lem;f=g}
    Let \(f\) and \(g\) be block chain local functions on some family \(\FF\). If we have that \({f(H,b) = g(H,b)}\), for all subgroups \(H\) of \(G\) with \(O_p(H) > 1\) and blocks \(b\) of \(kH\) with \(b^G=B\), then \(f(G,B) = g(G,B)\).
\end{lem}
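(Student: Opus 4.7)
The plan is to apply the block chain local condition to the difference function $h = f - g$, which is itself block chain local since integer linear combinations of block chain local functions are block chain local. It suffices to show $h(G,B)=0$, so I would write out
\[
    \sum_{C \in \RR} (-1)^{|C|} \sum_{b \mid B_C} h(G_C,b)=0
\]
and argue that every term except the one from the trivial chain vanishes by the hypothesis.

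The trivial chain $C_0 = \{Q_0\}$, where $Q_0=\{1\}$, has $|C_0|=0$ and $G_{C_0}=G$. For this chain, $B_{C_0} = \Br{1}{e_B}kG = e_B kG = B$, so the inner sum is just $h(G,B)$. Next I would handle the chains $C$ of positive length. For such a chain, write $C=\{Q_0 < Q_1 < \cdots < Q_n\}$ with $n \geq 1$. By the definition of $G_C$ as the intersection of the normalisers of the $Q_i$, each $Q_i$ is normal in $G_C$, and in particular $Q_n$ is a non-trivial normal $p$-subgroup of $G_C$. Hence $O_p(G_C) \geq Q_n > 1$.

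Now for any $b \mid B_C$, Lemma~\ref{lem;bcb} gives $b^G = B$, and since $G_C \leq G$ with $O_p(G_C)>1$, the hypothesis applies to the pair $(G_C,b)$ and yields $h(G_C,b) = 0$. Therefore every term in the alternating chain sum indexed by a non-trivial chain vanishes, leaving only $(-1)^0 h(G,B) = h(G,B) = 0$, as required.

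The proof is essentially an observation rather than a deep argument; the only point requiring care is that the top subgroup $Q_n$ of any non-trivial chain is normal in $G_C$, so that $O_p(G_C)$ is automatically non-trivial. The hypothesis is then exactly tailored so that every non-trivial-chain contribution is killed, isolating the trivial-chain term $h(G,B)$. I do not anticipate any serious obstacle.
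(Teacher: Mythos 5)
Your proof is correct and uses essentially the same argument as the paper: isolate the trivial-chain term of the alternating chain sum and kill every other term by the hypothesis, since $Q_n \trianglelefteq G_C$ forces $O_p(G_C) > 1$ for any non-trivial chain. Working with the difference $h = f - g$ and not bothering to reduce to the case $O_p(G)=1$ is a slightly tidier presentation than the paper's, but it is the same idea.
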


\begin{proof}
    We can assume that \(O_p(G)=1\) as otherwise the result holds trivially. As \(f\) and \(g\) are block chain local we have 
    \[
        \sum_{C \in \RR} (-1)^{|C|} \sum_{b \mid B_C}f(G_C,b) = 0 = \sum_{C \in \RR} (-1)^{|C|} \sum_{b \mid B_C}g(G_C,b),
    \]
    where \(\RR\) is a set of representatives of \(G\)-orbits of chains of \(p\)-subgroups. Notice that the trivial chain \(C_0\) is always in \(\RR\) and that for the trivial chain we have \(G_{C_0}=G\). When \(G_C=G\) we have \(B_C=B\) and thus \(B\) is trivially the only summand of \(B_{C_0}\). Also note that, as \(O_p(G)=1\), the trivial chain is the only such chain where \(G_C=G\). We can therefore rearrange the left-hand side of the above to be 
    \[
        f(G,B) = - \sum_{C \in \RR \setminus \{C_0\}} (-1)^{|C|} \sum_{b \mid B_C}f(G_C,b),
    \]
    as \(|C_0|=0\). We can also do the same with the right-hand side. Note that for \(C\) in \(\RR \setminus \{C_0\}\) we have \(O_p(G_C)>1\) so 
    \[
        \sum_{b \mid B_C}f(G_C,b) =  \sum_{b \mid B_C}g(G_C,b).
    \]
    Thus summing over all of \(\RR \setminus \{C_0\}\) we get 
    \[
        f(G,B) = - \sum_{C \in \RR \setminus \{C_0\}} (-1)^{|C|} \sum_{b \mid B_C}f(G_C,b) = - \sum_{C \in \RR \setminus \{C_0\}} (-1)^{|C|} \sum_{b \mid B_C} g(G_C,b) = g(G,B).
    \]
\end{proof}

We can also, using our involution on chains, prove a strong result about all conjugacy-constant functions on pairs.

\begin{prop}
    \label{pro;opg}
    If \(G\) is a finite group such that \(O_p(G) >1\) and \(B\) is a block of \(kG\), then every isomorphism-constant function on \((G,B)\) and pairs \((H,b)\), where \(H\) is a subgroup of \(G\) and \(b^G=B\), is block chain local.
\end{prop}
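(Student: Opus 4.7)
The plan is to use the involution $C \mapsto C^*$ from Section~\ref{sec;bcl} with the non-trivial $p$-subgroup $P = O_p(G)$, and to show that the terms of the alternating chain sum cancel in pairs. Since $O_p(G)$ is normal in $G$, every $p$-subgroup of $G$ normalises $P$, so this involution is defined on every chain of $p$-subgroups of $G$; and since $O_p(G)$ is characteristic in $G$, the formation of $C^*$ commutes with $G$-conjugation, so $*$ descends to a permutation of the set of $G$-orbits of chains.

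The first step is to verify that $G_C = G_{C^*}$ for every chain $C$. By Lemma~\ref{lem;sta}(\ref{lem;sta3}) every subgroup of $G_C$ that normalises $P$ lies in $G_{C^*}$; since $P$ is normal in $G$ every subgroup of $G$ normalises $P$, giving $G_C \leq G_{C^*}$, and the reverse inclusion follows by swapping $C$ and $C^*$ via part~(\ref{lem;sta2}). Lemma~\ref{lem;bcb} then shows that the sets of summands of $B_C$ and of $B_{C^*}$ coincide, each being the set of blocks $b$ of $kG_C$ with $b^G = B$, a set depending only on $G_C$ and $B$. Consequently
\[
    \sum_{b \mid B_C} f(G_C, b) \;=\; \sum_{b \mid B_{C^*}} f(G_{C^*}, b).
\]

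By Lemma~\ref{lem;sta}(\ref{lem;sta1}) the lengths $|C|$ and $|C^*|$ differ, so the induced involution on $G$-orbits is fixed-point free, and I would choose $\RR$ to be closed under $*$. The alternating chain sum then decomposes into contributions from pairs $\{C, C^*\} \subseteq \RR$, and each such pair contributes
\[
    (-1)^{|C|} \sum_{b \mid B_C} f(G_C, b) \;+\; (-1)^{|C^*|} \sum_{b \mid B_{C^*}} f(G_{C^*}, b) \;=\; 0,
\]
using the displayed equality together with $(-1)^{|C^*|} = -(-1)^{|C|}$. Summing over all pairs in $\RR$ yields the block chain local property.

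The main obstacle I expect is establishing $G_C = G_{C^*}$ and the consequent coincidence of the summand sets for $B_C$ and $B_{C^*}$; once these are in hand the pairing is immediate. Notably the argument does not invoke conjugacy-constancy of $f$ beyond the isomorphism-constancy built into the definition of a function on $\FF$, since the pairs $(G_C, b)$ and $(G_{C^*}, b)$ being compared are literally identical.
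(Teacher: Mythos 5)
Your proposal is correct and takes essentially the same approach as the paper's own proof: apply the involution $C \mapsto C^*$ with respect to $P = O_p(G)$, note that $G_C = G_{C^*}$ and that the summands of $B_C$ and $B_{C^*}$ agree, and cancel the terms of the alternating chain sum in pairs since $|C^*| = |C| \pm 1$. If anything, your justification of $G_C = G_{C^*}$ via Lemma~\ref{lem;sta}(\ref{lem;sta3}) and of $B_C = B_{C^*}$ via Lemma~\ref{lem;bcb} is spelled out more carefully than in the paper, which merely asserts both; the only cosmetic difference is that the paper takes $\RR$ to be representatives of even-length chains and adjoins $\RR^*$, whereas you take $\RR$ closed under $*$ and sum over pairs, which is the same bookkeeping.
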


\begin{proof}
    If \(Q= O_p(G) >1\), then for any chain \(C\) in \(G\) we have that \(Q\) is normalised by every group of \(C\) and thus \(C^*\) is defined with respect to \(Q\). We therefore get a bijection from the set of chains of odd-length to the set of chains of even-length given by sending \(C\) to \(C^*\). Each of these sets is invariant under \(G\)-conjugation. Also note that we have \(\left(C^g\right)^* = \left(C^*\right)^g\) for all \(g\) in  \(G\) and thus \(G_{C^*}=G_{C}\). Further, we must also have \(B_{C^*}=B_{C}\).
    
    Let \(\RR\) be a set of representatives of \(G\)-orbits of even-length chains in \(G\) and notice that we have \({\RR^* = \set{C^*}{C \in \RR}}\). Thus for \(f\) an isomorphism-constant function on \((G,B)\) and pairs \((H,b)\), where \(H\) is a subgroup of \(G\) and \(b^G=B\), we can write the alternating chain sum as 
    \[
        \sum_{C \in \RR} \left((-1)^{|C|} \sum_{b \mid B_C}f(G_C,b) + (-1)^{|C^*|} \sum_{b \mid B_{C^*}}f(G_{C^*},b) \right).
    \]
    Thus, because \(|C^*|=|C|\pm 1\), the pairs of terms cancel and the sum above goes to zero. We therefore have that \(f\) is block chain local on \((G,B)\) and pairs \((H,b)\), where \(H\) is a subgroup of \(G\) and \(b^G=B\).
\end{proof}

This means from now on when proving an isomorphism-constant function is chain local we need only consider groups \(G\) in pairs in \(\FF\) where \(O_p(G)=1\).

\subsection{The Brauer correspondence and block chain local functions}

In this section we will explore a specific condition for when a function is block chain local, arising from the Brauer correspondence. To do this we define several functions on pairs and show that they are block chain local. The proof of this result will follow roughly the same structure as the proof of Theorem~D by Isaacs and Navarro \cite[Section~4]{isa20}.

Fix a group \(G\) and a block \(B\) of \(kG\) with positive defect. Let \(P\) be a non-trivial \(p\)-subgroup of \(G\) and \(X\) a subset of \(G\). For all subgroups \(H\) of \(G\) and blocks \(b\) of \(kH\) with \(b^G=B\), define \(\omega_{(P,X)} (H,b)\) to be the number of \(H\)-orbits of pairs \((Q,Y)\) that are \(G\)-conjugate to \((P,X)\), where \(Q\) is a subgroup of \(H\), \(Y=\no{H}{Q}\) and \(Q\) is a defect group of \(b\).

\begin{prop}
    \label{pro;omp}
    Let \(G\) be a group and \(B\) a block of \(kG\) with positive defect. The function \(\omega_{(P,X)}\) is block chain local on \((G,B)\).
\end{prop}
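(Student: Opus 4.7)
The plan is to interpret the alternating chain sum as a signed count over $G$-orbits of normalising triples, and then to pair orbits off using the $*$-involution, using Lemma~\ref{lem;bGB}(ii) to control the block multiplicities.

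First I would note that, for any block $b$ of $kG_C$, all defect groups of $b$ form a single $G_C$-conjugacy class, so $\omega_{(P,X)}(G_C, b) \in \{0,1\}$: it equals $1$ precisely when some (equivalently every) defect group $Q$ of $b$ has $(Q, \no{G_C}{Q})$ $G$-conjugate to $(P,X)$. Swapping the order of summation therefore gives
\[
    \sum_{b \mid B_C} \omega_{(P,X)}(G_C,b) = \sum_{[(Q,Y)]} n(C,[(Q,Y)]),
\]
where the outer sum runs over $G_C$-orbits of pairs $(Q,Y)$ such that $(C,Q,Y)$ is a normalising triple with $(Q,Y)$ $G$-conjugate to $(P,X)$ and $Y = \no{G_C}{Q}$, and where $n(C,[(Q,Y)])$ denotes the number of blocks $b$ of $kG_C$ with $b^G = B$ having $Q$ as a defect group. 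Conjugation by any element of $G_C$ permutes these blocks while preserving both $b^G$ and defect groups, so $n(C,[(Q,Y)])$ depends only on the orbit.

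Next I would let $\TT$ be the set of all normalising triples $(C,Q,Y)$ with $(Q,Y)$ $G$-conjugate to $(P,X)$ and $Y = \no{G_C}{Q}$; this is clearly $G$-invariant. Applying Lemma~\ref{lem;orb} to each $C \in \RR$, the $G_C$-orbits appearing above correspond bijectively to the $G$-orbits on $\TT$ with first component $C$, and since $\RR$ meets every such orbit exactly once, the alternating chain sum rewrites as
\[
    \sum_{\OO} s(\OO)\, n(\OO),
\]
the sum running over all $G$-orbits $\OO$ on $\TT$, where $n(\OO) := n(C,[(Q,Y)])$ for any representative $(C,Q,Y)$ of $\OO$; conjugation-equivariance makes this independent of the representative. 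Finally I would invoke the $*$-involution: by Lemma~\ref{lem;bGB}(i) we have $\no{G_C}{Q} = \no{G_{C^*}}{Q}$, so $\TT$ is closed under $*$, each $\OO$ is paired with a distinct $\OO^*$ with $s(\OO^*) = -s(\OO)$, and Lemma~\ref{lem;bGB}(ii) supplies a bijection $b \mapsto b^*$ between the blocks contributing to $n(C,[(Q,Y)])$ and those contributing to $n(C^*,[(Q,Y)])$, so $n(\OO) = n(\OO^*)$. The paired orbits then contribute opposite terms, and the whole sum vanishes.

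The main obstacle I anticipate is the middle step, where the double sum over chains and local $G_C$-orbits has to be repackaged cleanly into a weighted sum over $G$-orbits on $\TT$; once one verifies that $n$ transports correctly under the bijection $\tau$ of Lemma~\ref{lem;orb} and that $n$ is genuinely well-defined on $G$-orbits rather than just on $G_C$-orbits for a fixed $C$, the cancellation afforded by Lemma~\ref{lem;bGB}(ii) is immediate.
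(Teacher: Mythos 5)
Your argument is correct and follows essentially the same route as the paper's: reinterpret the alternating chain sum via normalising triples, apply Lemma~\ref{lem;orb} to convert $G_C$-orbits of pairs $(Q,Y)$ into $G$-orbits on $\TT$, and then cancel paired orbits $\OO$, $\OO^*$ using the $*$-involution and Lemma~\ref{lem;bGB}. The one place you genuinely diverge is that you carry along the multiplicity $n(\OO)$, the number of blocks $b$ of $kG_C$ with $b^G=B$ having $Q$ as a defect group, and verify $n(\OO)=n(\OO^*)$ via the bijection of Lemma~\ref{lem;bGB}(ii). The paper's write-up instead asserts directly that $\sum_{b\mid B_C}\omega_{(P,X)}(G_C,b)$ equals the number of $G_C$-orbits of pairs $(Q,Y)$ with $(C,Q,Y)\in\TT$, which, read literally, is an overcount whenever a single $Q$ is a defect group of more than one block of $kG_C$ inducing to $B$; your weighted count $\sum_{[(Q,Y)]}n(C,[(Q,Y)])$ is the precise statement. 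Since $s(\OO)=-s(\OO^*)$ and you show $n(\OO)=n(\OO^*)$, the signed sum $\sum_\OO s(\OO)\,n(\OO)$ still vanishes, so the conclusion is the same; but your bookkeeping is the cleaner one and is the version I would recommend keeping. One small point worth making explicit: the blocks counted by $n$ automatically have positive defect, since $Q$ is the nontrivial second component of a normalising triple, which is exactly the hypothesis Lemma~\ref{lem;bGB}(ii) needs.
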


\begin{proof}
    Consider \(H=G_C\), the stabiliser of a chain \(C\) of \(p\)-subgroups in \(G\). By definition \(Y\) normalises \(Q\) and both \(Y\) and \(Q\) are subsets of \(G_C\). Thus \((C,Q,Y)\) is a normalising triple. Let \(\TT\) be the set of normalising triples \((C,Q,Y)\) such that \((Q,Y)\) is \(G\) conjugate to \((P,X)\), \(Y = \no{G_C}{Q}\) and \(Q\) is a defect group of \(b\), where \(b\) is a block of \(kG_C\) and \(b^G=B\). For a triple \((C,Q,Y)\) in \(\TT\) note that \((C,Q,Y)^*\) is also in \(\TT\) by Lemma~\ref{lem;bGB}(ii). By their definitions one can see that \(\omega_{(P,X)} (G_C,b)\) is the number of \(G_C\)-orbits of pairs \((Q,Y)\) such that \((C,Q,Y)\) lies in \(\TT\) and \(Q\) is a defect group of \(b\). Thus we see that 
    \[
        \sum_{b \mid B_C} \omega_{(P,X)} (G_C,b),
    \]
    is the number of \(G_C\)-orbits of pairs \((Q,Y)\) such that \((C,Q,Y)\) lies in \(\TT\). We can therefore apply Lemma~\ref{lem;orb} to show that this is also the number of \(G\)-orbits \(\OO\) of elements of \(\TT\) such  that \(C\) appears as the first component of a triple in \(\OO\). Recall that, as \(C\) is the first component of a triple in the \(G\)-orbit \(\OO\), by definition \((-1)^{|C|} = s(\OO)\) and we have 
    \[
        (-1)^{|C|} \sum_{b \mid B_C} \omega_{(P,X)} (G_C,b) = \sum_\OO s(\OO),
    \]
    where the sum on the right is over all \(G\)-orbits \(\OO\) in \(\TT\) that contain a triple with first component \(C\). If we now consider the set \(\RR\) of representatives of \(G\)-orbits of chains in \(G\) then for each \(\OO\) in \(\TT\) there is exactly one \(C\) in \(\RR\) and \(b\) in \(B_C\) such that \(C\) is the first component of a triple in \(\OO\). Thus if we sum over \(\RR\) we have 
    \[
        \sum_{C \in \RR}(-1)^{|C|} \sum_{b \mid B_C}\omega_{(P,X)} (G_C,b) = \sum_\OO s(\OO),
    \]
    where now the sum on the right is over all \(G\)-orbits \(\OO\) in \(\TT\). As we have established, every \(G\)-orbit \(\OO\) in \(\TT\) is paired with a unique \(G\)-orbit \(\OO^*\) which is also in \(\TT\). Thus, as \(s(\OO)=-s(\OO^*)\) we see that the above sum is zero and \(\omega_{(P,X)}\) is block chain local on the arbitrary pair \((G,B)\), where \(B\) has positive defect.
\end{proof}

We can now define the next function. Let \(N\) and \(G\) be finite groups and \(B\) a block of \(kG\) with nontrivial defect group \(P\). Define \(\Omega_N (G,B) = 1\) if \(\no{G}{P} \cong N\) and \({\Omega_N (G,B) = 0}\) otherwise.

\begin{cor}
    \label{cor;omn}
    The function \(\Omega_N\) is block chain local.
\end{cor}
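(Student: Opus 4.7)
The plan is to express \(\Omega_N\) as a finite sum of the functions \(\omega_{(P, X)}\) from Proposition~\ref{pro;omp} and then invoke that proposition together with the fact that sums of block chain local functions are block chain local.

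Fix \((G, B) \in \FF\). If \(B\) has defect zero then the alternating chain sum degenerates: for every non-trivial chain \(C = \{1 = Q_0 < \cdots < Q_n\}\) one has \(\Br{Q_n}{e_B} = 0\), hence \(B_C = 0\), and only the trivial-chain term survives, contributing \(\Omega_N(G, B) = 0\). So I may assume \(B\) has positive defect, in which case any \(b \mid B_C\) also has positive defect, since its defect groups contain the top group \(Q_n\) of \(C\).

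The core step is the identity
\[
    \Omega_N(H, b) = \sum_{(P, X) \in \mathcal{S}} \omega_{(P, X)}(H, b),
\]
which I would prove for every \((H, b) \in \FF\) with \(H \leq G\) and \(b^G = B\), where \(\mathcal{S}\) denotes a set of representatives for the \(G\)-conjugacy classes of pairs \((P, X)\) with \(P\) a non-trivial \(p\)-subgroup of \(G\) and \(X \leq G\) satisfying \(X \cong N\). The right-hand side counts \(H\)-orbits of pairs \((Q, Y)\) with \(Q \leq H\) a defect group of \(b\), \(Y = \no{H}{Q}\), and \((Q, Y)\) \(G\)-conjugate to some member of \(\mathcal{S}\); by the choice of \(\mathcal{S}\) this last condition reduces to \(Y \cong N\). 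Since the defect groups of \(b\) form a single \(H\)-orbit and \(Y\) is determined by \(Q\), there is at most one such orbit, and it exists exactly when \(\no{H}{Q} \cong N\); this matches \(\Omega_N(H, b)\), while for defect-zero \(b\) both sides vanish.

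Substituting this identity into the alternating chain sum for \(\Omega_N\) and interchanging sums rewrites the whole thing as a finite sum, indexed by \((P, X) \in \mathcal{S}\), of the alternating chain sums for the \(\omega_{(P, X)}\); each of these vanishes by Proposition~\ref{pro;omp}, giving the result. The main obstacle I anticipate is nailing down the decomposition \(\Omega_N = \sum_{(P, X) \in \mathcal{S}} \omega_{(P, X)}\) airtight: one has to check that the \(G\)-orbit classification by \(\mathcal{S}\) partitions the relevant \(H\)-orbits without overcounting or omission, and that the defect-zero edge cases cause no trouble.
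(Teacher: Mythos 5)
Your proposal is correct and takes essentially the same route as the paper: both fix \((G,B)\), decompose \(\Omega_N\) on the relevant pairs \((H,b)\) as a sum of the \(\omega_{(P,X)}\) over a set \(\SS\) of \(G\)-orbit representatives, and then invoke Proposition~\ref{pro;omp} together with the additivity of the alternating chain sum. The only cosmetic difference is that you dispose of the defect-zero case directly (noting \(B_C=0\) for non-trivial chains and that all \(b\mid B_C\) have positive defect when \(B\) does), whereas the paper instead reduces at the outset to the case \(O_p(N)\neq 1\); both are routine.
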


\begin{proof}
    First note that we can assume \(O_p(N)\) is non-trivial without loss of generality as, when \(O_p(N)\) is trivial \(\Omega_N\) is identically zero and hence trivially block chain local. Let \((G,B)\) be a pair in \(\FF\) where \(B\) has nontrivial defect group \(P\). Let \(\SS\) be a set of pairs of subgroups \((P,X)\) of \(G\) such that \(P \nleq X\) and \(X \cong N\). Let \(\omega\) denote the restriction of \(\Omega_N\) to pairs \((H,b)\), where \(H\) is a subgroup of \(G\) and \(b^G=B\). We proceed by showing all the restrictions are block chain local for arbitrary \((G,B)\) in \(\FF\) which in turn will show that \(\Omega_N\) is block chain local. 
    
    Let \(H\) be a subgroup of \(G\) with order divisible by \(p\) and \(b\) be a block of \(kH\) such that \(b^G=B\). By definition we have \(\omega (H,b)=1\) if \(\no{H}{Q} \cong N\) where \(Q\) is a defect group of \(b\) and \(\omega (H,b)=0\) otherwise. Since all defect groups of \(b\) are \(H\)-conjugate, \(\omega (H,b)\) is the number of \(H\)-orbits on the set \(\PP\) of pairs \((Q,Y)\) such that \(Y=\no{H}{Q}\), \(Q\) is a defect group of \(b\) and \(Y \cong N\). Further, every member of each \(H\)-orbit on \(\PP\) is \(G\)-conjugate to some unique \((P,X)\) in \(\SS\). The total number of \(H\)-orbits on \(\PP\) whose members are \(G\)-conjugate to some given member \((P,X)\) of \(\SS\) is \(\omega_{(P,X)} (H,b)\), where \(\omega_{(P,X)}\) is as in Proposition~\ref{pro;omp}. Thus 
    \[
        \omega (H,b) = \sum_{(P,X) \in \SS} \omega_{(P,X)} (H,b),
    \]
    and as \(\omega_{(P,X)}\) is block chain local \(\omega\) is as well. Thus as \(\omega\) is block chain local and, as \((G,B)\) in \(\FF\) was arbitrary, we have that \(\Omega_N\) is block chain local for any \(N\) a finite group.
\end{proof}

\begin{thm}
    \label{thm;fgn}
    Let \(f\) be an isomorphism-constant function on a family \(\FF\) with values in a free abelian group. If, for all \((G,B)\) in \(\FF\), we have \(f(G,B)=f(N,b)\), where \(N = \no{G}{P}\), \(P\) is a defect group of \(B\) and \(b\) is the Brauer correspondent of \(B\), then \(f\) is block chain local for all pairs \((G,B)\) where \(B\) has positive defect.
\end{thm}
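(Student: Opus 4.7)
The plan is to write $f$ as a linear combination of indicator functions attached to isomorphism classes of Brauer pairs and then prove that each such indicator is block chain local by refining Proposition~\ref{pro;omp} and Corollary~\ref{cor;omn}.

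Since $f$ is isomorphism-constant it is in particular conjugacy-constant, so by Proposition~\ref{pro;opg} the result is automatic whenever $O_p(G) > 1$; we may therefore assume $O_p(G) = 1$. The hypothesis together with isomorphism-constancy implies that, for any pair $(H, c)$ in $\FF$ with $c$ of positive defect, the value $f(H, c)$ is determined by the isomorphism class $\tau$ of the associated Brauer pair $(\no{H}{Q}, c_0)$, where $Q$ is a defect group of $c$ and $c_0$ is its Brauer correspondent. For each such $\tau$, define
\[
    \Omega_\tau(H, c) = \begin{cases} 1 & \text{if the Brauer pair of } (H,c) \text{ belongs to } \tau, \\ 0 & \text{otherwise,} \end{cases}
\]
so that $f(H, c) = \sum_\tau f_\tau\, \Omega_\tau(H, c)$, where $f_\tau$ is the common value of $f$ on $\tau$ and at most one summand is non-zero for any fixed $(H, c)$. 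Since integer combinations of block chain local functions are themselves block chain local, it suffices to show each $\Omega_\tau$ is block chain local.

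To do this I would refine Proposition~\ref{pro;omp} by introducing a counting function $\omega_{(P, X, b_0)}(H, c)$, namely the number of $H$-orbits of quadruples $(Q, Y, c_0)$ such that $Y = \no{H}{Q}$, $Q$ is a defect group of $c$, $c_0$ is the Brauer correspondent of $c$ in $kY$, and $(Q, Y, c_0)$ is $G$-conjugate to the fixed $(P, X, b_0)$ with $b_0$ a block of $kX$ having defect group $P$. Decomposing exactly as in Corollary~\ref{cor;omn} expresses $\Omega_\tau$ as a finite sum of such $\omega_{(P, X, b_0)}$ taken over a transversal of $G$-conjugacy classes of data with $(X, b_0) \in \tau$. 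The block chain locality of each $\omega_{(P, X, b_0)}$ is then proved by the orbit-cancellation argument of Proposition~\ref{pro;omp}: the quadruple $(C, Q, Y, c_0)$ is a normalising triple enriched by a Brauer correspondent block, and the involution $(C, Q, Y) \mapsto (C^*, Q, Y)$ lifts to this enriched setting by Lemma~\ref{lem;bGB}. Part (ii) of that lemma supplies a unique block $b^*$ of $kG_{C^*}$ paired to $b$ with the same defect group $Q$ and $(b^*)^G = B$, and part (i) gives $\no{G_C}{Q} = \no{G_{C^*}}{Q}$, so $b$ and $b^*$ share a common Brauer correspondent.

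The main obstacle is exactly this bookkeeping of the block data through the involution on normalising triples; once Lemma~\ref{lem;bGB} is used to pair blocks while preserving their Brauer correspondents, the cancellation argument of Proposition~\ref{pro;omp} transfers essentially verbatim and no genuinely new ingredient is required.
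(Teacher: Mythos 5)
Your proposal is structurally close to the paper's proof but is sharper on one point that matters. You decompose $f$ over isomorphism classes $\tau$ of Brauer pairs $(\no{H}{Q}, c_0)$ via an indicator $\Omega_\tau$, whereas the paper decomposes over isomorphism classes $[M]$ of the normaliser alone via $\Omega_M$, writing $f(G_C,b) = \sum_{[M]} \Omega_M(G_C,b)\sum_{b'} f(M,b')$ with the inner sum over blocks $b'$ of $M$ inducing to $b$. The paper then pulls $\sum_{b'} f(M,b')$ outside the alternating chain sum, but as stated that inner sum depends on the particular $(C,b)$ being summed over, because the Brauer correspondent block is not determined by the isomorphism type of $M$; your finer decomposition by Brauer pair isomorphism class bakes the block into the indicator, so the coefficient $f_\tau$ is genuinely constant and the factoring step is clean. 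The price is that you cannot simply cite Corollary~\ref{cor;omn}: you must refine $\omega_{(P,X)}$ to $\omega_{(P,X,b_0)}$ and carry the block datum through Lemma~\ref{lem;orb} and the orbit-cancellation argument of Proposition~\ref{pro;omp}, but you correctly observe that Lemma~\ref{lem;bGB} makes the involution respect that datum (same defect group $Q$, same $Y=\no{G_C}{Q}=\no{G_{C^*}}{Q}$, and $b$ and $b^*$ share Brauer image at $Q$ hence the same Brauer correspondent $c_0$ in $kY$), so the argument transfers with only bookkeeping changes. Overall your version is the more careful implementation of the same strategy: it buys a rigorous factoring step at the cost of reproving the auxiliary locality results in block-decorated form.
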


\begin{proof}
    We make use of the function \(\Omega_M\), which we proved is block chain local for any finite group \(M\) in Corollary~\ref{cor;omn}. Let \((G,B)\) be a pair in \(\FF\) where \(B\) has positive defect. Let \(N\) be the normaliser of a defect group of \(B\), so \((N,B')\) is in \(\FF\), where \(B'\) is the Brauer correspondent of \(B\), and \(f(N,B')\) is defined. We have that \(f(G,B)=f(N,B')\). However note that we can also write
    \[
        f(N,B') = \sum_{[M]} \Omega_M(G,B) \sum_{b'} f(M,b'),
    \]
    where the left sum runs over all isomorphism classes \([M]\) of groups in pairs in \(\FF\) and the right sum over all blocks \(b'\) of \(M\) such that \(b'^G=B\). This is because \(\Omega_M(G,B)=0\) unless \(M \cong N\) in which case \(\Omega_M(G,B)=1\) and the only block of \(kM\) where \(b'^G=B\) is \(B'\).
    
    If we now consider a chain \(C\) in \(G\) with stabiliser \(G_C\) and \(b\) a block of \(kG_C\) we have 
    \[
        f(G_C,b) = \sum_{[M]} \Omega_M(G_C,b) \sum_{b'} f(M,b'),
    \]
    where the left sum is as above and the right sum is over all \(b'\) of \(M\) where \(b'^{G_C}=b\). Now, for \(\RR\) a set of representative of \(G\)-orbits of chains, we have 
    \begin{align*}
        \sum_{C \in \RR} (-1)^{|C|} \sum_{b \mid B_C} f(G_C,b)   & = \sum_{C \in \RR} (-1)^{|C|} \sum_{b \mid B_C}\sum_{[M]} \Omega_M(G_C,b) \sum_{b'} f(M,b') \\
                                                                & = \sum_{[M]} \left(\sum_{C \in \RR} (-1)^{|C|} \sum_{b \mid B_C} \Omega_M(G_C,b) \right)\sum_{b'} f(M,b') \\
                                                                & = 0,
    \end{align*}
    as \(\Omega_M\) is block chain local for all \(M\) a finite group. Thus as \(G\) and \(B\) were arbitrary we have that \(f\) is block chain local on all pairs \((G,B)\) where \(B\) has positive defect.
\end{proof}

\subsection{The Alperin--McKay function}

Recall the Alperin--McKay conjecture states that for a group \(G\) the number of irreducible height zero characters in a block \(B\) of \(kG\) with defect group \(D\) is the same as the number of irreducible height zero characters of the block \(B'\) of \(\no{G}{D}\) with defect group \(D\), that is Brauer correspondent to \(B\). We can rephrase this in terms of a function on a family of pairs \(\FF\). Let \(\text{am}\) be the Alperin--McKay function given as 
    \[
        \am{G}{B} = \text{ \# of irreducible height zero characters of \(B\)},
    \]
for \((G,B)\) a pair in \(\FF\). This function is obviously isomorphism-constant and maps to the integers. The Alperin--McKay conjecture is the statement \(\am{G}{B} = \am{N}{B'}\), where \(N = \no{G}{D}\) and \(D\) is the defect group of \(B\) and \(B'\) is its Brauer correspondent, for all \((G,B)\) in the family \(\FF\) and \(\FF\) being arbitrary. We now have everything necessary to state our motivating result.

\begin{thm}
    \label{thm;amf}
    The Alperin--McKay conjecture holds if and only if the Alperin--McKay function is block chain local on all pairs \((G,B)\) such that \(B\) has positive defect.
\end{thm}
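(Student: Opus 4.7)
The plan is to compare the Alperin--McKay function $\am$ against the auxiliary function defined by $g(H, b) = \am{\no{H}{D}}{b'}$, where $D$ is a defect group of $b$ and $b'$ is its Brauer correspondent in $\no{H}{D}$. The Alperin--McKay conjecture is exactly the statement that $\am$ and $g$ coincide on every positive defect pair. The function $g$ automatically satisfies the hypothesis of Theorem~\ref{thm;fgn}, since for $N = \no{G}{D}$ the defect group $D$ is normal in $N$ and the Brauer correspondent of $B'$ in $N$ is again $B'$; thus $g$ is block chain local on positive defect pairs.

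The forward direction is then immediate: if the conjecture holds then $\am = g$, so $\am$ inherits block chain locality from $g$. Equivalently, the conjecture is precisely the hypothesis of Theorem~\ref{thm;fgn} for $f = \am$, and a single appeal to that theorem delivers the conclusion.

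For the reverse direction I would suppose $\am$ is block chain local on positive defect pairs, so that $h := \am - g$ is as well, and induct on $|G|$ with the target $h(G, B) = 0$ for every positive defect pair $(G, B)$. In the inductive step assume $h(H, b) = 0$ for every $H$ with $|H| < |G|$ and every positive defect block $b$ with $b^G = B$. If $O_p(G) = 1$, Lemma~\ref{lem;f=g} applied to $\am$ and $g$ closes the step: its hypothesis requires $h(H, b) = 0$ for $H \leq G$ with $O_p(H) > 1$ and $b^G = B$, which is vacuous at $H = G$ and follows from the inductive hypothesis for proper $H$. Hence $h(G, B) = 0$, which is the conjecture for $(G, B)$.

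The main obstacle is the case $O_p(G) > 1$. Here Proposition~\ref{pro;opg} shows that block chain locality at $(G, B)$ is automatic, so the hypothesis of the theorem provides no direct information about this pair, and Lemma~\ref{lem;f=g} is not available either. To push the induction through this case one needs a separate block-theoretic reduction --- for example producing a smaller pair whose Alperin--McKay statement is equivalent to the statement at $(G, B)$, perhaps by exploiting the fact that every defect group of $B$ contains $O_p(G)$. This is the step that lies outside the chain-theoretic machinery developed so far and is where I expect the proof to require the most care.
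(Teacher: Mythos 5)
Your forward direction matches the paper's, and your backward direction runs along the same lines: compare $\am$ to the auxiliary function $g$ (the paper calls it $\text{am}_0$), observe $g$ satisfies the hypothesis of Theorem~\ref{thm;fgn} and hence is block chain local, and then try to deduce $\am = g$ via Lemma~\ref{lem;f=g}. You have correctly isolated the one genuine obstruction: Lemma~\ref{lem;f=g} and the inductive step only handle $O_p(G)=1$, and Proposition~\ref{pro;opg} makes the block chain local hypothesis vacuous exactly when $O_p(G)>1$, so nothing you have proved gives information there.

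The missing ingredient is a known reduction theorem. The paper runs the argument by contradiction with a minimal counterexample $(G,B)$ (morally the same as your induction on $|G|$), and then invokes Kessar--Linckelmann, \cite[Proposition~5]{kes19}, which states that a minimal counterexample to the Alperin--McKay conjecture must have $O_p(G)=1$. This single external input lets one ignore the $O_p(G)>1$ case entirely, because such a pair can never be a minimal counterexample, and then Lemma~\ref{lem;f=g} applies cleanly: for proper subgroups $H\leq G$ with $O_p(H)>1$ minimality gives $\am(H,b)=\text{am}_0(H,b)$ for every $b$ with $b^G=B$, so $\am(G,B)=\text{am}_0(G,B)$, contradicting the choice of counterexample. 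So your ``separate block-theoretic reduction'' is exactly this result; with it, your proposal would complete. The paper itself notes that the argument leans heavily on Kessar--Linckelmann, so you should not expect a reduction constructed from the chain-theoretic machinery alone.
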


Note we do not need to consider blocks of defect zero as the conjecture is vacuous for blocks with normal defect group, in particular defect zero.

\begin{proof}
    The forward direction is a clear result of Theorem~\ref{thm;fgn}: recall that the Alperin--McKay conjecture can be stated as \(\am{G}{B}=\am{N}{B'}\) for all \(G\) a \(p\)-group and all \(B\) a block of \(kG\) where \(N = \no{G}{P}\), \(P\) is a defect group of \(B\) and \(B'\) is the Brauer correspondent of \(B\). 
    
    Now let us consider the other direction, in other words suppose that the Alperin--McKay function is block chain local and show this implies the Alperin--McKay conjecture holds. We do this by contradiction. Let \((G,B)\) be a minimal counterexample to the conjecture, so \(\am{G}{B} \neq \am{N}{B'}\). This implies that \(N \neq G\) and thus \(|G|\) is divisible by \(p\). By a result of Kessar and Linckelmann \cite[Proposition~5]{kes19} we have that if \((G,B)\) is a minimal counterexample to the Alperin--McKay conjecture then \(O_p(G)=1\), so we can assume this as well.
    
    Let \(\text{am}_0\) be the function defined by setting \(\text{am}_0 (X,b) = \am{M}{b'}\), where \(M\) is the normaliser of the defect group of \(b\) and \(b'\) is the Brauer correspondent of \(b\). This is clearly isomorphism-constant. By definition \(\text{am}_0 (X,b) = \am{M}{b'} = \text{am}_0 (M,b')\), for all \((X,b)\) in \(\FF\), so by Theorem~\ref{thm;fgn} \(\text{am}_0\) is chain local.
    
    Note for \(X \subseteq G\) with \(O_p(X) >1\) this implies \(X\) is a proper subgroup of \(G\) as \(O_p(G)=1\). As \(G\) is minimal we have \(\am{X}{b} = \am{M}{b'} = \text{am}_0 (X,b)\), where \(b\) is  a block of \(kX\) such that \(b^G=B\) and \(M\) and \(b'\) are as before. Thus, by Lemma~\ref{lem;f=g}, we have that \(\am{G}{B} = \text{am}_0 (G,B) = \am{N}{B'}\), where \(N\) is the normaliser of a defect group of \(B\) and \(B'\) is a block of \(kN\) such that \(B'^G=B\). This is a contradiction and the result is proved.
\end{proof}

The proof in itself relies on the insightful result of Kessar and Linckelmann \cite[Proposition~5]{kes19} as without this information about a minimal counterexample our method would not work.

\subsection{Sufficient conditions for block chain local}

Here we collect several more results that are also adapted from those given by Isaacs and Navarro. We start with a direct corollary of Theorem~\ref{thm;fgn}.

\begin{cor}
    \label{cor;con}
    A constant function \(f\) is block chain local on all pairs \((G,B)\) where \(B\) has positive defect.
\end{cor}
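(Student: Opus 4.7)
The plan is to deduce this directly from Theorem~\ref{thm;fgn}, which asserts that any isomorphism-constant function $f$ on $\FF$ satisfying $f(G,B) = f(N,b)$ for $N = \no{G}{P}$, $P$ a defect group of $B$, and $b$ the Brauer correspondent of $B$, is block chain local on all pairs of positive defect.

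The key and essentially only step is to verify that a constant function meets this hypothesis. If $f \equiv c$ takes the single value $c$ on every pair in $\FF$, then in particular $f(G,B) = c = f(N,b)$ for the pair $(N,b)$ above, so the hypothesis of Theorem~\ref{thm;fgn} holds trivially. Applying that theorem delivers the conclusion immediately.

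There is no meaningful obstacle here; all the combinatorics has already been handled in Proposition~\ref{pro;omp}, Corollary~\ref{cor;omn}, and Theorem~\ref{thm;fgn}. The only point worth remarking on is that the restriction to blocks of positive defect is inherited from Theorem~\ref{thm;fgn}, and is intrinsic to this method because normalising triples are required to have a non-trivial second component, so defect-zero blocks simply do not enter the Brauer-correspondence argument that underpins the proof.
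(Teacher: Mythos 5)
Your argument is exactly the paper's: verify that a constant function trivially satisfies the hypothesis $f(G,B)=f(N,b)$ of Theorem~\ref{thm;fgn} and conclude. Correct, and the same approach.
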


\begin{proof}
    As \(f\) is constant for all pairs \((G,B)\) in \(\FF\) we have \(f(G,B)=f(N,b)\) where \(b\) is the Brauer correspondent of \(B\) and \(N\) is the normaliser of a defect group of \(B\). Thus Theorem~\ref{thm;fgn} applies and the result follows.
\end{proof}

The next two results tell us about functions that are entirely described by the values they take on \(p\)-subgroups and their blocks. 

\begin{thm}
    \label{thm;ngq}
    Let \(f\) be a function from some family \(\FF\) to a free abelian group \(U\) such that 
    \[
    f(G,B) = \sum_Q \sum_b h(\no{G}{Q},b),
    \]
    where the first sum runs over a set of representatives of \(G\)-orbits of \(p\)-subgroups, the second sum runs over all blocks \(b\) of \(\no{G}{Q}\) such that \(b^G=B\) and the function \(h\) is an isomorphism-constant function on \(\FF\). Then \(f\) is block chain local.
\end{thm}

This theorem is the block chain local version of Theorem~F(b) in Isaacs and Navarro \cite{isa20}. Our proof follows the same structure as theirs and thus in order to prove it we need to introduce one of the functions that they use. 

Let \(N\) be a finite group. Then for any group \(G\) let \(g_N(G)\) be the number of conjugacy classes of nontrivial \(p\)-subgroups \(Q\) of \(G\) such that \(\no{G}{Q} \cong N\). Isaacs and Navarro prove that this function is chain local \cite[Corollary~4.6]{isa20}, in other words we have 
\[
    \sum_{C \in \PP} g_N (G_C) =0,
\]
where \(G\) is any group with order divisible by \(p\) and \(\PP\) is a set of representatives of \(G\)-orbits of chains in \(G\).

\begin{proof}
    Using the function \(g_N\) from above we see that for all pairs \((G,B)\) in \(\FF\) we can write \(f\) as
    \[
        f(G,B) = \sum_{[N]} g_N(G) \sum_{b'} h(N,b'),
    \]
    where the first sum runs over isomorphism classes of normalisers of nontrivial \(p\)-subgroups of \(G\) and the second sum runs over all blocks \(b\) of \(\no{G}{Q}\) such that \(b^G=B\). If we now consider the alternating chain sum of the arbitrary pair \((G,B)\) then we get 
    \begin{align*}
        \sum_{C \in \PP} \sum_{ b \mid B_C} f(G_C,b)   = & \sum_{C \in \PP} \sum_{ b \mid B_C} \sum_{[N]} g_N(G_C) \sum_{b'} h(N,b'), \\
                                                    = & \sum_{[N]} \sum_{C \in \PP} g_N(G_C) \sum_{ b \mid B_C} \sum_{b'} h(N,b'),
    \end{align*}
    where are sums are defined as before. As \(g_N\) is chain local \cite[Corollary~4.6(b)]{isa20} we have that \(\sum_{C \in \PP} g_N(G_C)=0\) for all groups \(N\). Thus for each \([N]\) in the first sum everything is zero and so 
    \[
        \sum_{C \in \PP} \sum_{ b \mid B_C} f(G_C,b) = 0,
    \]
    for all pairs \((G,B)\) in \(\FF\).
\end{proof}

This gives us a further sufficient condition to determine when a function is block chain local along with Theorem~\ref{thm;fgn}. There is a third such result we can adapt from Isaacs and Navarro \cite{isa20}, that is Theorem~F(c). First recall that a radical \(p\)-subgroup of a finite group \(G\) is a \(p\)-subgroup \(Q\) with the property \(Q = O_p(\no{G}{Q})\).

\begin{thm}
    Let \(f\) be a function from some family \(\FF\) to a free abelian group \(U\) such that 
    \[
    f(G,B) = \sum_Q \sum_b h(\no{G}{Q},b),
    \]
    where the first sum runs over a set of representatives of \(G\)-orbits of radical \(p\)-subgroups, the second sum runs over all blocks \(b\) of \(\no{G}{Q}\) such that \(b^G=B\) and the function \(h\) is an isomorphism-constant function on \(\FF\). Then \(f\) is block chain local.
\end{thm}

The proof of this result has exactly the same structure as that of Theorem~\ref{thm;ngq} however instead of using the function \(g_N\) it uses \(r_N\), also from Isaacs and Navarro \cite{isa20}. For \(N\) a finite group and \(G\) a group with order divisible by \(p\) we define \(r_N(G)\) to be the number of conjugacy classes of nontrivial radical \(p\)-subgroups \(Q\) of \(G\) such that \(\no{G}{Q} \cong N\). This function is also chain local \cite[Corollary~4.6(c)]{isa20} and thus the proof follows in the same way.

\subsection{Blocks with defect one}

Here we offer a block-by-block version of a result by Isaacs and Navarro  \cite[Theorem~E(d)]{isa20}. Let \(\FF\) be a family of pairs of the form \((G,B)\) and let \(k_1(G,B)\) be the number of irreducible ordinary characters in \(B\) with defect one.

\begin{thm}
    The function \(k_1\) is block chain local.
\end{thm}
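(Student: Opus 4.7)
My approach is to apply the radical $p$-subgroup analogue of Theorem~\ref{thm;ngq}, the theorem stated immediately above this one in the paper.

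First, dispose of the defect zero case. If $(G,B) \in \FF$ with $B$ of defect zero, then $B$ contains a single irreducible character, which has defect zero, so $k_1(G,B) = 0$. For any block $b$ of $kG_C$ with $b^G = B$, Lemma~\ref{lem;bli}(i) forces a defect group of $b$ into the trivial defect group of $B$, hence $b$ also has defect zero and $k_1(G_C,b) = 0$. The alternating chain sum therefore vanishes term by term.

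For positive defect blocks, I would aim to establish the local identity
\[
    k_1(G,B) \;=\; \sum_{Q}\sum_{b'} k_1(\no{G}{Q},b'),
\]
where $Q$ ranges over $G$-orbits of non-trivial radical $p$-subgroups of $G$ and $b'$ over blocks of $\no{G}{Q}$ with $(b')^G = B$. Once this identity is in place, it has exactly the form required by the radical analogue of Theorem~\ref{thm;ngq} with $h = k_1$, and block chain locality follows at once. I note that the more direct route through Theorem~\ref{thm;fgn} is insufficient here: the identity $k_1(G,B) = k_1(\no{G}{D},b')$ that it would require can fail when $B$ has defect greater than one, for example at the principal $2$-block of $\mathrm{PSL}_2(11)$, where the left-hand side equals $2$ while the right-hand side, computed in $A_4$, equals $0$.

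The main obstacle is the local identity itself. Its non-block version is Brauer's classical formula $k_1(G) = \#\{[x] : |\ce{G}{x}|_p = p\}$; for each such class the $p$-part $x_p$ of a representative generates a canonical subgroup $\langle x_p \rangle$ of order $p$. Refining block-by-block requires one to track, for each defect one character $\chi \in B$, which block $b'$ of $\no{G}{\langle x_p \rangle}$ the associated local character belongs to, and to verify that $(b')^G = B$. This should follow from an analysis via generalised decomposition numbers at $x_p$, or alternatively by an induction on $|G|$ using Lemma~\ref{lem;f=g} after reducing to the case $O_p(G) = 1$.
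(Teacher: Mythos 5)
The counterexample you use to dismiss Theorem~\ref{thm;fgn} is incorrect, and this is the crux of the problem: the two degree-$10$ characters of $\mathrm{PSL}_2(11)$ do \emph{not} lie in the principal $2$-block. Every block always contains a height-zero character, whose defect equals the defect of the block; so a block of defect $2$ (such as the principal $2$-block here, since the Sylow $2$-subgroup is $V_4$) contains a character of defect $2$. By Brauer's theorem \cite[Theorem~3]{bra41} (the very result the paper invokes), a block containing a character of defect one has all its characters of defect at most one, so no block of defect greater than one can contain a defect-one character. Hence the degree-$10$ characters of $\mathrm{PSL}_2(11)$ form a separate $2$-block of defect one, and $k_1(\mathrm{PSL}_2(11),B_0)=0$, matching $k_1(A_4,b_0)=0$. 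In fact the equality $k_1(G,B)=k_1(\no{G}{D},B')$ holds for \emph{every} block of positive defect: it is trivially $0=0$ when $d(B)\neq 1$ by the observation above, and when $d(B)=1$ it follows from Dade's cyclic defect theory \cite[Theorem~1(i)]{dad66}, which computes $k(B)$ and $k(B')$ by the same formula in terms of $\no{G}{D}$. This is exactly the route the paper takes: Brauer's theorem for $d(B)\neq 1$, Dade's theorem for $d(B)=1$, then Theorem~\ref{thm;fgn}, with the defect-zero case handled by the termwise argument you also give.

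Beyond the flawed counterexample, the alternative you propose is not actually carried out. You state the local identity $k_1(G,B)=\sum_Q\sum_{b'} k_1(\no{G}{Q},b')$ over radical $p$-subgroups as the key ingredient, but then acknowledge it as the ``main obstacle'' and only gesture at possible methods (generalised decomposition numbers, induction via Lemma~\ref{lem;f=g}) without establishing it. Even if true, this identity is considerably harder than the Brauer-correspondent identity it replaces, and there is no reason to avoid the latter. The defect-zero reduction you give is fine and agrees with the paper, but the positive-defect portion of your argument is incomplete and rests on a misreading of the block decomposition of $\mathrm{PSL}_2(11)$.
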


\begin{proof}
    First note that for \(B\) a block of defect zero it cannot contain irreducible ordinary characters of defect one by definition, so \(k_1(G,B)=0\). By Lemma~\ref{lem;blj} all blocks \(b\) of subgroups of \(G\), where \(b^G=B\), have defect less than or equal to \(B\). Thus for all pairs \((G_C,b)\) in the alternating chain sum for \((G,B)\) we have that \(b\) has defect zero so \(k_1(G_C,b)=0\). The alternating chain sum is therefore zero and \(k_1\) is block chain local on pairs where the block has defect zero. Additionally Brauer showed that if a block contains an irreducible ordinary character of defect one then all irreducible ordinary characters have defect at most one \cite[Theorem~3]{bra41}. Thus for all blocks \(B\) with defect greater than one we get \(k_1(G,B)=0\) and, by Corollary~\ref{cor;con}, \(k_1\) is block chain local in this case. 
    
    We now only need to consider blocks \(B\) of \(kG\) with defect one as these are the only blocks where irreducible ordinary characters of defect one lie. Also only irreducible ordinary characters of defect one lie in these blocks. Let \(D\) be the defect group of \(B\). A result by Dade \cite[Theorem~1(i)]{dad66} gives the number of irreducible ordinary characters of \(B\) in terms of \(\no{G}{D}\) and the block \(b\) of \(\no{G}{D}\) with defect group \(D\). Crucially if we apply this to the block \(b\) we see that the number of irreducible ordinary characters of it are in terms of \(\no{G}{D}\) and the block \(b\). Thus we have \(k_1(G,B) = k_1(\no{G}{D},b)\), and noticing that \(B\) and \(b\) are Brauer correspondents we can apply Theorem~\ref{thm;fgn}. Therefore \(k_1\) is block chain local on pairs \((G,B)\) where \(B\) has defect one and therefore is for all pairs.
\end{proof}

\section{Acknowledgements}

Firstly I would like to thank my supervisor, Dr David Craven, for introducing me to this problem and for helping me throughout my work. I would also like to thank the School of Mathematics at the University of Birmingham and EPSRC for funding this research.



\bibliographystyle{maa}
\bibliography{main}{}

\end{document}